\newtheorem{theorem}{Theorem}[section]
\newtheorem{proposition}[theorem]{Proposition}
\theoremstyle{definition}
\newtheorem{defn}[theorem]{Definition}
\newenvironment{customthm}[1]
  {\innercustomthm}
  {\endinnercustomthm}
\newenvironment{customcor}[1]
  {\innercustomcor}
  {\endinnercustomcor}
\newenvironment{customprop}[1]
  {\innercustomprop}
  {\endinnercustomprop}
\newenvironment{customlem}[1]
  {\innercustomlem}
  {\endinnercustomlem}
\begin{document}

\title{Complementary Numerical Sets}

\author{Matthew Guhl}
\author{Jazmine Juarez}
\author{Vadim Ponomarenko}
\author{Rebecca Rechkin}
\author{Deepesh Singhal}


\begin{abstract}
A numerical set $S$ is a cofinite subset of $\mathbb{N}$ which contains $0$.  We use the natural bijection between numerical sets and Young diagrams to define a numerical set $\widetilde{S}$, such that their Young diagrams are complements.  We determine various properties of $\widetilde{S}$, particularly with an eye to closure under addition (for both $S$ and $\widetilde{S}$), which promotes a numerical set to become a numerical semigroup.
\end{abstract}

\maketitle

\section{Introduction}
We denote by $\mathbb{N}$ the set of naturals, i.e. the set of non-negative integers.  A \emph{numerical set} is a cofinite subset of $\mathbb{N}$ that contains 0. The natural numbers that are missing from a numerical set $S$ are called its gaps, the collection of all gaps is denoted by $Gap(S)$. The largest gap is called the \emph{Frobenius number} and is denoted by $F(S)$. The number of gaps is called the \emph{genus} and is denoted by $g(S)$. The smallest non-zero element is called its \emph{multiplicity} and is denoted by $m(S)$. A \emph{numerical semigroup} is a numerical set which is closed under addition. Given a numerical set there is a natural way of constructing a numerical semigroup from it, which is called the \emph{atomic monoid} of the set or the \emph{associated semigroup} of the set. We denote the associated semigroup of a numerical set $S$ as $A(S)$, and it is given by
$$A(S)=\{s\in S \mid s+S \subseteq S\}.$$
It is straightforward to show that $A(S)$ is in fact a numerical semigroup and it has same Frobenius number as $S$. Also note that if $S$ was a numerical semigroup, then the associated semigroup of $S$ is itself, since $S$ is closed under addition. This operation of associated semigroup was defined in \cite{AS1} and has been studied in several recent papers \cite{Kaplan et al, AS2 MM, AS3 Grobenius, AS4, Deeprilium 1, Deeprilium 2}. See \cite{NS Book} for a general reference on numerical semigroups.

The \emph{atoms} of a numerical semigroup $S$ are the positive elements of $S$ that cannot be written as a sum of two positive elements of $S$. The number of atoms of $S$ is called its \emph{embedding dimension}, is denoted by $e(S)$. It is known that $e(S)\leq m(S)$, and $S$ is said to be of \emph{max embedding dimension} if $e(S)=m(S)$. An atom (resp. positive element) of $S$ is called a small atom (resp. small element) of $S$ if it is smaller than the Frobenius number of $S$.
Note that if $S$ is a numerical set for which $A(S)$ has no small atoms then $A(S)=\{0,F(S)+1\rightarrow\}$. Here the $\rightarrow$ indicates that all numbers after $F(S)+1$ are in the numerical set. \cite{AS2 MM} computes the density of such numerical sets among all numerical sets of a given Frobenius number. There are $2^{f-1}$ numerical sets with Frobenius number $f$, they prove that the following limit exists.
$$\lim_{f\to\infty}\frac{\#\{S\mid A(S)=\{0,f+1\rightarrow\}\}}{2^{f-1}}=\gamma.$$
The limit $\gamma$ is approximately $0.48$ meaning, for around $48\%$ of the numerical sets $A(S)$ has no small atoms.
The authors of \cite{AS3 Grobenius} study the numerical sets for which $A(S)$ has one small element and they conjecture that for a fixed $l$ the following limit exists
$$\lim_{f\to\infty}\frac{\#\{S\mid A(S)=\{0,f-l,f+1\rightarrow\}\}}{2^{f-1}}=\gamma_l.$$
In \cite{AS4} this conjecture is proved and extended to the case of $n$ small elements.
If $A(S)$ has one small atom then $A(S)$ must be of the form $m\mathbb{N}\cup \{f+1\rightarrow\}$. With $m$ fixed, \cite{Deeprilium 2} enumerates the number of such numerical sets and shows that it is a quasi-polynomial in $f$. 

Kaplan et al. in \cite{Kaplan et al} showed that numerical sets have a bijective correspondence to Young diagrams. A \emph{Young diagram} is an array given by stacking rows of squares of varying length, but with the property that the rows are non-increasing in length as they progress down. An example is given in Figure \ref{fig:YDiagram}.
We now describe how to get a Young diagram from a given numerical set. A Young diagram is determined by the path connecting its bottom left point to its top right point.
In order to get this path we start at the origin and consider the natural numbers starting from $0$. We take a step left for every natural number that is in the numerical set and take a step upwards for every natural number not in the numerical set. We stop once we reach the Frobenius number. The path thus obtained will enclose a Young diagram. For example the given Young diagram in Figure \ref{fig:YDiagram} would be obtained if our numerical set is $\{0,2,4,7,8,10,12\rightarrow\}$.
This process is reversible and it gives a one to one correspondence between numerical sets and Young diagrams.

\begin{figure}
    \centering
    \includegraphics[width=0.5\textwidth]{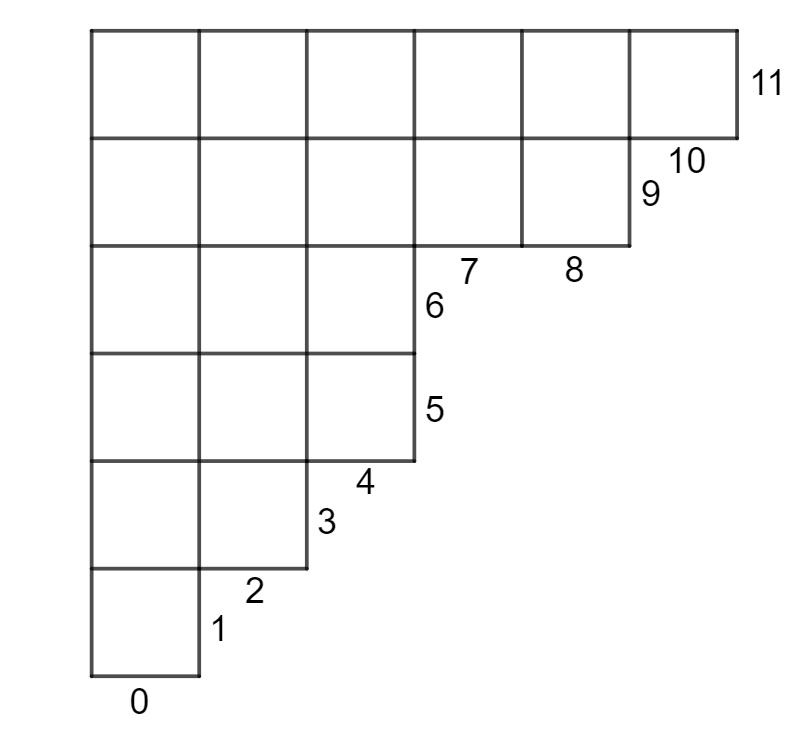}
    \caption{Young diagram corresponding to $\{0,2,4,7,8,10,12\rightarrow\}$}
    \label{fig:YDiagram}
\end{figure}

The \emph{complement} of a Young diagram is found by completing the rectangular grid with length and width of the first row and first column respectively and then rotating the other piece by $180^{\circ}$. Look at Figure \ref{fig complement} for an example.
We can start with a numerical set $S$, consider its Young diagram, take its complement and consider the numerical set associated with the complement. The numerical set thus obtained is called the complement of $S$ and is denoted by $\widetilde{S}$.
In our example with $S=\{0,2,4,7,8,10,12\rightarrow\}$ we have $\widetilde{S}=\{0,2,3,6,8,10\rightarrow\}$.
It should be noted that applying the complement operation twice does not lead back to the original numerical set and several numerical sets can have the same complement.
In this paper, we continue the study of numerical semigroups with no small atoms, with one small atom, and with one small element, using the Young diagram tools described above.  Our main results are as follows.

\begin{figure}
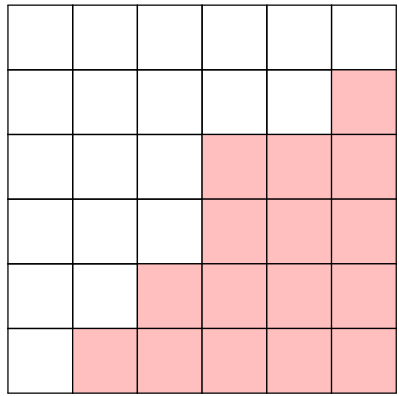

\begin{center}
\ytableausetup{mathmode, boxsize = 2em}
\begin{ytableau}
    \phantom{a} & \phantom{a} & \phantom{a} & \phantom{a} & \phantom{a} &  \\
    \phantom{a} & \phantom{a} & \phantom{a} &  & &*(pink) \\
    \phantom{a} & \phantom{a} &&*(pink)&*(pink)&*(pink) \\
    \phantom{a} & \phantom{a} & &*(pink)&*(pink)&*(pink) \\
    \phantom{a} &  &*(pink)&*(pink)&*(pink)&*(pink) \\
    \phantom{a}&*(pink)&*(pink)&*(pink)&*(pink)&*(pink)
    
\end{ytableau}
\end{center}
\captionsetup{justification=centering,margin=2cm}
\caption{\footnotesize The complement of the Young diagram is in \textcolor{pink}{pink}. Notice it is also a Young diagram when rotated by 180 degrees}
\label{fig complement}
\end{figure}

\begin{customthm}{\ref{S NS}}
Given a numerical semigroup $S$, $A(\widetilde{S})$ has at most one small atom. Moreover if $S$ has more than one small atom then $A(\widetilde{S})$ has no small atoms.
\end{customthm}
\begin{customthm}{\ref{Stilde NS}}
Let $S$ be a numerical set for which $\widetilde{S}$ is a numerical semigroup. Then $A(S)$ has at most one small atom. Moreover if $S$ is not a numerical semigroup then $A(S)$ has at most one small element.
\end{customthm}

\begin{customcor}{\ref{both NS}}
Given a numerical semigroup $S$, its complement $\widetilde{S}$ is a numerical semigroup if and only if $S$ has at most one small atom.
\end{customcor}

\section{Structure of Complementary Numerical Sets}

Young diagrams of numerical sets are closely related to their associated semigroups. Every box on the Young diagram has a \emph{hook}, which consists of all the boxes below and to the right of the given box and that box itself. The number of boxes in the hook is called the \emph{hook length} of that box. This is seen in Figure \ref{Hook numbers}, where the number in each square is the hook length of that square, and a particular hook with hook length 5 is emphasized in green. Kaplan et al. \cite{Kaplan et al} have shown that the hook lengths of a numerical set's Young diagram correspond precisely to the gaps of its associated semigroup.

\begin{figure}[h]
\begin{center}
\ytableausetup{mathmode, boxsize = 2em}
\begin{ytableau}
    11 & 9 & 7 & 4 & 3 & 1 \\
    9 & 7 & *(green)5 & *(green)2 & *(green)1 \\
    6 & 4 & *(green)2 \\
    5 & 3 & *(green)1 \\
    3 & 1  \\
    1
\end{ytableau}
\end{center}
\captionsetup{justification=centering,margin=2cm}
\caption{\footnotesize Since there are 2 squares to the right and 2 below, the hook length at this position is 5.}
\label{Hook numbers}
\end{figure}

There are often many numerical sets that have the same associated numerical semigroup which means that the set of hook numbers does not uniquely determine a Young diagram.
Herman and Chung \cite{Herman and Chung} found that the hook multi-set is also not unique to a particular Young diagram. They did however, find that a numerical set is fully characterized by its hook multi-set accompanied by the hook multi-set of the complement of the Young diagram.
We will now give a more direct description of the complement numerical set.

\begin{defn}
Let S be a numerical set. We denote its Base $B(S)$ as its biggest element smaller than the Frobenius number namely
$$B(S)=\max\{s \in S \mid s<F(S)\}.$$
\end{defn}

\begin{customthm}{2.2}\label{SetDefofComplement}
Let $S$ be a numerical set with complement $\widetilde{S}$. Then,
$$\Tilde{S}=\{B(S)-s \mid s\in S \textnormal{ and } s \leq B(S) \}\cup\{B(S)\rightarrow\}.$$
\end{customthm}
\begin{proof}
Consider the following Young diagram:
\begin{center}
\ytableausetup{mathmode, boxsize = 2.5em}
\begin{ytableau}
    \phantom{a} & \phantom{a} & \phantom{a} & \phantom{a} & \phantom{a} & B \\
    \phantom{a} & \phantom{a} & \phantom{a} & B-l & \dots & *(pink) 0 \\
    \phantom{a} & \phantom{a} & \vdots & *(pink)l & *(pink) \phantom{a} & *(pink)\phantom{a} \\
    \phantom{a} & \phantom{a} & B-p & *(pink)\phantom{a} & *(pink)\phantom{a} & *(pink)\phantom{a} \\
    \phantom{a} & B-n & *(pink) p & *(pink)\phantom{a} & *(pink)\phantom{a} & *(pink)\phantom{a} \\
    \vdots & *(pink) n & *(pink)\phantom{a} & *(pink)\phantom{a} & *(pink)\phantom{a} & *(pink)\phantom{a} 
\end{ytableau}
\end{center}
The $0$ of $\widetilde{S}$ corresponds to $B(S)$ of $S$ in the Young diagram. Following the path from there we see that the $l$ of $\widetilde{S}$ corresponds to $B(S)-l$ of $S$ in the diagram as long as $0\leq l\leq B(S)$. Since horizontal line segments remain horizontal upon a $180^{\circ}$ rotation we see that for $0\leq l\leq B(S)$, $l\in \widetilde{S}$ if and only if $B(S)-l\in S$.
The $0$ of $S$ corresponds to $B(S)$ of $\widetilde{S}$, the Young diagram of $\widetilde{S}$ finishes before this. This means that $[B(S),\infty)\subseteq \widetilde{S}$.

\end{proof}

\noindent We now present some immediate properties of $\widetilde{S}$.
\begin{customprop}{2.3}
Let $S$ be a numerical set with complement $\widetilde{S}$. Then:\\
(a) $F(\widetilde{S}) \le B(S) - 1 \le F(S) - 2$; further, if $1\notin S$ then $F(\widetilde{S})=B(S)-1$.\\
(b) $g(\widetilde{S})=g(S)+B(S)-F(S).$\\
(c) $B(\widetilde{S})\leq B(S)-m(S)$, moreover equality holds if and only if $1\not\in S$.
\end{customprop}
\begin{proof}
Parts (a) and (c) directly follow from Theorem \ref{SetDefofComplement}. For part (b) Theorem \ref{SetDefofComplement} implies that $|Gap(S)\cap [0,B(S)]|=g(\widetilde{S})$, moreover $Gap(S)\cap [B(S)+1,\infty)=[B(S)+1,F(S)]$. Therefore $g(S)=g(\widetilde{S})+(F(S)-B(S))$.
\end{proof}
\noindent
We now characterize which numerical sets arise as complements of numerical semigroups.
\begin{customprop}{2.4}
Let $T$ be a numerical set. There exists a numerical semigroup $S$ for which $T=\widetilde{S}$ if and only if\\
(a) $F(T)\not\in T+T$; and\\
(b) for every $x,y\in T\cap [0,F(T)]$ with $x+y> F(T)$, we have $x+y-F(T)-1\in T$.
\end{customprop}
\begin{proof}
First suppose we have a numerical semigroup $S$ for which $T=\widetilde{S}$. If there are $x,y\in T$ for which $x+y=F(T)$ then $x,y\leq F(T)=B(S)-1$. This means that $B(S)-x, B(S)-y\in S$, and since $S$ is a numerical semigroup we have $2B(S)-x-y\in S$. But
$$2B(S)-x-y=2B(S)-(B(S)-1)=B(S)+1.$$
However $B(S)<B(S)+1\leq F(S)$, so $B(S)+1$ cannot be in $S$ and we have a contradiction. Next suppose $x,y\in T\cap [0,F(T)]$ with $x+y>F(T)$. We have $B(S)-x, B(S)-y\in S$ and hence $B(S)-(x+y-B(S))\in S$. And since $0\leq x+y-B(S)\leq 2F(T)-B(S)=B(S)-2$ we get that $x+y-B(S)\in \widetilde{S}=T$ and of course $x+y-B(S)=x+y-F(S)-1$.

Conversely if $T$ is a numerical set that satisfies the two conditions then define
$$S=\{F(T)+1-x\mid x\in T\cap [0,F(T)+1]\}\cup \{F(T)+3\rightarrow\}.$$
Clearly $F(S)=F(T)+2$, $B(S)=F(T)+1$ and therefore $T=\widetilde{S}$. We need to show that $S$ is a numerical semigroup. Given positive $a,b\in S$, if $a+b\geq F(T)+3$ then clearly $a+b\in S$. Otherwise $a+b\leq F(T)+2$, so $a,b\leq F(T)+1$ and hence $F(T)+1-a,F(T)+1-b\in T$. Now
$$2F(T)+2-a-b=F(T)+1-a+F(T)+1-b\neq F(T).$$
Which means that $a+b\neq F(T)+2$ and hence $a+b\leq F(T)+1$. Let $x=F(T)+1-a$, $y=F(T)+1-b$ so $x,y\in T\cap [0,F(T)]$ and $x+y=2F(T)+2-a-b\geq F(T)+1$ and hence $x+y-F(T)-1\in T$. Finally $x+y-F(T)-1\leq F(T)+1$ and hence $F(T)+1-(x+y-F(T)-1)\in S$. Since $F(T)+1-(x+y-F(T)-1)=a+b$, we see that $S$ is indeed a numerical semigroup and we are done.
\end{proof}

\section{$A(\Tilde{S})$ when $S$ is a numerical semigroup}
This section will prove Theorem \ref{S NS}. Toward this end, we consider a numerical semigroup $S$ and study the associated semigroup of its complement. Note that if $S$ has a single small atom then it must be the multiplicity $m(S)$. Since $S$ is closed under addition it must also have all multiples of $m(S)$. However if any of the elements in $S\cap [0,F(S)]$ is not a multiple of $m(S)$, then the smallest such element would be another small atom which is impossible. Thus if $S$ is a numerical semigroup with a single small atom then $S\cap [0,F(S)]$ consists precisely of all the multiples of $m(S)$ that are in that range. On the other hand if $S$ has more than one small atom then $\cap [0,F(S)]$ must contain elements that are not multiples of $m(S)$.

\begin{theorem}\label{S NS}
Given a numerical semigroup $S$, $A(\widetilde{S})$ has at most one small atom. Moreover if $S$ has more than one small atom then $A(\widetilde{S})$ has no small atoms.
\end{theorem}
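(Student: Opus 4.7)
The plan is to translate $a \in A(\widetilde{S})$ into a condition on $S$ alone. Using Theorem 2.2, any $a \in \widetilde{S}$ with $a < B(S)$ has the form $a = B(S) - s$ for a unique $s \in S \cap [1, B(S)]$, and unpacking $a + \widetilde{S} \subseteq \widetilde{S}$ (observing that sums reaching $B(S)$ are automatic) reduces the condition to
\[
(*)_a: \quad v \in S \cap [a+1, B(S)] \implies v - a \in S.
\]
Since $F(A(\widetilde{S})) = F(\widetilde{S}) = B(S) - 1$ by Proposition 2.3(a), any small atom of $A(\widetilde{S})$ must lie in $[1, B(S) - 2]$.

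When $S$ has a single small atom $m$, the discussion preceding the theorem forces $S \cap [0, F(S)] = \{0, m, 2m, \ldots, km\}$ with $B(S) = km$, and Theorem 2.2 then gives $\widetilde{S} = \{0, m, \ldots, km\} \cup [B(S), \infty)$. This set is closed under addition, so $A(\widetilde{S}) = \widetilde{S}$, and the only candidate small atom is $m$; this handles the ``at most one'' claim in this case.

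Now assume $S$ has at least two small atoms, fix $a \in \widetilde{S}\cap [1, B(S) - 2]$, and set $s = B(S) - a$ and $m = m(S)$; since $s \in S$ is positive, $s \geq m$. I would exhibit a witness $v \in S \cap [a+1, B(S)]$ with $v - a \notin S$ in three subcases. If $a$ is not a multiple of $m$, take $v = (\lfloor a/m\rfloor + 1)m$: then $v - a \in [1, m-1]$ is a gap, and $v - a \leq m - 1 < m \leq s$ forces $v < B(S)$. If $a = km$ is a multiple of $m$ but some small atom $m_i$ exceeds $a$, take $v = m_i$: if $m_i - a$ were in $S$, then $m_i = a + (m_i - a)$ would decompose $m_i$ as a sum of two positive elements of $S$, contradicting atomicity. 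Finally, if $a = km$ and every small atom is $\leq a$, use the second small atom $m_2$ and set $r = m_2 \bmod m \in [1, m-1]$; then $v = a + r = m_2 + (k - \lfloor m_2/m \rfloor) m$ lies in $S$ because the coefficient of $m$ is positive (since $a \geq m_2 > \lfloor m_2/m \rfloor \cdot m$), while $v - a = r$ is a gap and $v < B(S)$.

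The main obstacle is the third subcase: the naive candidate $v = (k+1)m$ has $v - a = m \in S$ and so fails, forcing a subtler construction that exploits an atom of nonzero residue mod $m$. This is precisely where the hypothesis of a second small atom enters; the other cases are direct applications of the characterization $(*)_a$.
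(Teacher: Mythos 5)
Your proposal is correct, and it shares the paper's overall skeleton: first show that every $a\in\widetilde{S}\cap[1,B(S)-2]$ with $m\nmid a$ fails to lie in $A(\widetilde{S})$, then eliminate the multiples of $m$ using the existence of a second small atom. For the first step your witness $v=(\lfloor a/m\rfloor+1)m$ is literally the paper's witness $B(S)-(k+1)m$ read through the bijection $t\mapsto B(S)-t$ of Theorem \ref{SetDefofComplement}, and your reduction of $a\in A(\widetilde{S})$ to the condition $(*)_a$ on $S$ is the same translation the paper performs on the fly. Where you genuinely diverge is the multiples-of-$m$ case. The paper stays inside $\widetilde{S}$: it takes $nm$ to be the largest multiple of $m$ in $\widetilde{S}$ below $B(S)$, splits on whether $(n+1)m\in\widetilde{S}$, and in the harder branch works with $z=B(S)-y$ for the smallest $y\in S$ with $m\nmid y$, requiring a further split on $l\ge n-j$ versus $l<n-j$. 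You instead split on whether some small atom of $S$ exceeds $a$: if so, atomicity of $m_i$ disposes of $a$ in one line (the paper never invokes atomicity of elements of $S$ this directly), and if not, the residue $r=m_2\bmod m$ of the second atom produces the witness $v=a+r=(k-\lfloor m_2/m\rfloor)m+m_2\in S$ with $v-a=r$ a gap. Your organization is shorter and makes the role of the second atom more transparent; the paper's version has the mild advantage of never needing to name the atoms of $S$. Two small housekeeping points: you should say a word about the trivial case where $S$ has no small atoms (there $B(S)=0$, $\widetilde{S}=\mathbb{N}$, and the interval $[1,B(S)-2]$ is empty, so your framework handles it vacuously), and you should record the one-line reason why $m\nmid m_2$ (otherwise $m_2=m+(m_2-m)$ would not be an atom), which your subcase 3 tacitly uses.
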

\begin{proof}
Set $m=m(S)$ for convenience. If $S$ has no small atoms then $\widetilde{S}=\mathbb{N}$ is a numerical semigroup and has no small atoms. Next if $S$ has exactly one small atom then
$$S\cap [0,B(S)]=\{lm\mid 0\leq l\leq N\}.$$
Where $B(S)=Nm$. Theorem \ref{SetDefofComplement} implies that $\widetilde{S}=m\mathbb{N}\cup \{Nm\rightarrow\}$, which is a numerical semigroup and has at most one small atom. Now for the rest of the proof assume that $S$ has at least two small atoms.

Since $S$ is a numerical semigroup we have $1\not\in S$, so by Theorem \ref{SetDefofComplement} we see that $F(\widetilde{S})=B(S)-1$, also $B(\widetilde{S})=B(S)-m$.
We will first show that all elements in $A(\widetilde{S})\cap [0,F(\widetilde{S})]$ must be multiples of $m$.
Consider some $x\in \widetilde{S}$ with $1\leq x< B(S)-1$ and $m\nmid x$. Say $x=km+r$ with $1\leq r\leq m-1$. Now since $(k+1)m\in S$ and $(k+1)m<B(S)$ (remember that $x\leq B(\widetilde{S})=B(S)-m$) we have $B(S)-(k+1)m\in\widetilde{S}$ by Theorem \ref{SetDefofComplement}. Now
$$x+(B(S)-(k+1)m)=B(S)-(m-r).$$
Since $r\neq 0$ we know that $m-r\not\in S$ and hence $B(S)-(m-r)\not\in \widetilde{S}$. This shows that $x\not\in A(\widetilde{S})$.

Now consider some $x\in\widetilde{S}$ of the form $x=l m$ for some $l\geq 1$ such that $x<B(S)$. Let $n m$ be the largest multiple of $m$ in $\Tilde{S}$ such that $nm<B(S)$. We have two cases.

Case 1: Suppose $(n+1)m \notin \Tilde{S}$. By Theorem \ref{SetDefofComplement}, $B(S)-nm\in S$. Then since $S$ is closed under addition we have
$$B(S)-(n+1-l)m=B(S)-nm+(l-1)m\in S.$$
This in turn implies that $(n+1-l)m\in \widetilde{S}$. And since
$$x+(n+1-l)m=(n+1)m\not\in\widetilde{S}$$
we see that $x\not\in A(\widetilde{S})$.

Case 2: Now suppose $(n+1)m \in \widetilde{S}$. The maximality of $n$ implies that $(n+1)m\geq B(S)$. This means that $(n+1)m\geq B(S)>nm$.
Now since $nm\in\widetilde{S}$ we have $B(S)-nm\in S$, but $0<B(S)-nm\leq m$. Therefore we must have $B(S)-nm=m$ i.e. $B(S)=(n+1)m$.

Since $S$ has at least two small atoms, there exists a $y\in S$ such that $m\nmid y$ and $y<F(S)$. Choose the smallest such $y$. Let $z=B(S)-y$ so $z$ is the largest number in $\widetilde{S}\cap [0,B(S)]$ that is not a multiple of $m$. Say $z=jm+r'$ with $0<r'<m$.

If $l\geq n-j$, then $y+(l-n+j)m\in S$ and hence
$$(n-l)m+r'=z-(l-n+j)m=B(S)-(y+(l-n+j)m)\in\widetilde{S}.$$
Now $x+(n-l)m+r'=nm+r'$. We know that $B(S)-(nm+r')=m-r'\not\in S$ and hence $nm+r'\not\in \widetilde{S}$. This means that $x\not\in A(\widetilde{S})$.

Now consider the case when $l<n-j$. In this case
$$x+z=lm+jm+r'\leq (n-1)m+r'<nm<B(S).$$
This means that $x+z<B(S)$ and $x+z$ is not divisible by $m$. This means that $x+z\not\in \widetilde{S}$ and hence $x\not\in A(\widetilde{S})$.
We conclude that $x\not\in A(\widetilde{S})$ in all cases and hence $A(\widetilde{S})$ has no small atoms provided $S$ has at least $2$ small atoms.
\end{proof}

During the proof we showed that if $S$ has at most one small atom then $\widetilde{S}$ is a numerical semigroup. This proves one direction of Corollary \ref{both NS}.

\section{$A(S)$ when $\widetilde{S}$ is a numerical semigroup}
In this section we will prove Theorem \ref{Stilde NS}. We consider numerical sets $S$ for which $\widetilde{S}$ is a numerical semigroup and study $A(S)$. We have already seen one such scenario in which $S$ is a numerical semigroup with at most one small atom.

\begin{theorem}\label{Stilde NS}
Let $S$ be a numerical set for which $\widetilde{S}$ is a numerical semigroup. Then $A(S)$ has at most one small atom. Moreover if $S$ is not a numerical semigroup then $A(S)$ has at most one small element.
\end{theorem}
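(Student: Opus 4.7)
The plan is to split on whether $S$ itself is a numerical semigroup.

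When $S$ is a numerical semigroup we have $A(S)=S$, so I only need to show that $S$ has at most one small atom. Arguing by contradiction, suppose $S$ had two or more small atoms. Theorem~\ref{S NS} then forces $A(\widetilde{S})$ to have no small atoms, i.e., $A(\widetilde{S})=\{0\}\cup\{F(\widetilde{S})+1\rightarrow\}$. But by hypothesis $\widetilde{S}$ is a numerical semigroup, so $A(\widetilde{S})=\widetilde{S}$, hence $B(\widetilde{S})=0$. Any proper numerical semigroup $S$ satisfies $1\notin S$, so Proposition~2.3(c) gives $B(S)=m(S)$, forcing $S$ to have only one small element---a contradiction.

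The main content is the case where $S$ is not a numerical semigroup; here I will establish the stronger conclusion that $A(S)$ has at most one small element (which already implies at most one small atom). Suppose for contradiction that $a_1<a_2$ are two small elements of $A(S)$, and set $C:=F(S)-B(S)\geq 1$. Using $a+S\subseteq S$ and Theorem~\ref{SetDefofComplement}, every small $a\in A(S)$ satisfies $a\geq C+1$, $S\cap(B(S)-a,F(S)-a]=\emptyset$ (equivalently $[a-C,a-1]\subseteq \operatorname{Gap}(\widetilde{S})$), and $a+(S\cap[0,B(S)-a])\subseteq S$. The hypothesis that $\widetilde{S}$ is a numerical semigroup translates, again via Theorem~\ref{SetDefofComplement}, to the closure rule: for $s_1,s_2\in S\cap[1,B(S)]$ with $s_1+s_2>B(S)$, we have $s_1+s_2-B(S)\in S$. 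I plan to combine these conditions, applied to $a_1$, $a_2$ and their iterated sums $ka_1+la_2\in A(S)$, to deduce that $S$ itself must be closed under addition, contradicting the assumption that $S$ is not a semigroup.

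The argument naturally splits into two substeps. First, using that both $a_1,a_2\in A(S)$, I would show $S\cap[1,C]=\emptyset$; this, together with the semigroup condition on $\widetilde{S}$, rules out any sum $s+t$ (with $s,t\in S\cap[0,B(S)]$) landing in the forbidden window $(B(S),F(S)]$, since otherwise $s+t-B(S)$ would have to lie in the empty set $S\cap[1,C]$. Second, I would show that $S\cap[0,B(S)]$ is closed under sums that stay at most $B(S)$. Together these give $S+S\subseteq S$, the desired contradiction.

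The main obstacle is this combined substep. Its difficulty comes from the fact that the semigroup condition on $\widetilde{S}$ only constrains sums of small elements of $S$ that exceed $B(S)$, whereas closure under small sums is a genuinely separate condition; the extra leverage must come from exploiting the two distinct small elements $a_1,a_2$ of $A(S)$ simultaneously, tracking how iterated sums $ka_1+la_2$ interact with the forbidden windows $(B(S)-a_i,F(S)-a_i]$ attached to each $a_i$. I expect a case analysis depending on whether $a_2$ is a multiple of $a_1$, and on the relative sizes of $a_1,a_2,C,B(S)$, will be required to close the argument.
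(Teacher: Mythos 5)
Your first case ($S$ itself a numerical semigroup) is correct, and it is a legitimately different route from the paper's: you import Theorem \ref{S NS} to force $\widetilde{S}=A(\widetilde{S})=\{0,F(\widetilde{S})+1\rightarrow\}$, read off $B(\widetilde{S})=0$, and use Proposition~2.3(c) to get $B(S)=m(S)$, contradicting the existence of two small atoms. That is a clean, non-circular derivation (the paper instead obtains this case as Corollary \ref{both NS}, as a consequence of the very theorem you are proving). Your preliminary observations for the second case are also all correct: every small element $a$ of $A(S)$ satisfies $a\geq F(S)-B(S)+1$, the window $(B(S)-a,F(S)-a]$ is disjoint from $S$, and the semigroup property of $\widetilde{S}$ translates via Theorem \ref{SetDefofComplement} into the closure rule $s_1+s_2-B(S)\in S$ whenever $s_1,s_2\in S\cap[1,B(S)]$ and $s_1+s_2>B(S)$.

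The problem is that the second case is the entire content of the theorem, and you have not proved it: both of your substeps --- that two small elements of $A(S)$ force $S\cap[1,F(S)-B(S)]=\emptyset$, and that $S\cap[0,B(S)]$ is closed under sums not exceeding $B(S)$ --- are announced (``I would show'', ``I expect a case analysis \dots will be required'') rather than established, and you yourself flag the second as an unresolved obstacle. Since the hypothesis you are trying to contradict is (by the theorem) vacuous, I cannot exhibit a counterexample to your substeps, but nothing in the proposal indicates how the two small elements $a_1,a_2$ would actually be leveraged, and the quantity you organize the argument around, $C=F(S)-B(S)$, is not the one that drives the proof. The paper's argument instead hinges on $\widetilde{m}=m(\widetilde{S})=B(S)-b$, where $b$ is the largest element of $S\cap[0,B(S)-1]$: one first shows every element of $A(S)\cap[1,B(S)-1]$ must be a multiple of $\widetilde{m}$ (adding a suitable element of $S$ pushes any non-multiple into the gap interval $(b,B(S))$), and then a case analysis on the largest multiple $n\widetilde{m}$ of $\widetilde{m}$ in $S\cap[0,B(S)-1]$ shows $A(S)\cap[1,B(S)-1]=\emptyset$ except in the one configuration where $S\cap[0,B(S)]=\{0,\widetilde{m},\dots,(n+1)\widetilde{m}\}$ and $F(S)<B(S)+\widetilde{m}$ --- which forces $S$ to be a numerical semigroup, i.e.\ falls back into your first case. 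Without an argument of comparable precision, your proposal does not prove the statement.
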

\begin{proof}
Set $\widetilde{m}=m(\widetilde{S})$ for convenience. Let $b=Max(S\cap [0,B(S)-1])$. By Theorem \ref{SetDefofComplement} we know that $\widetilde{m}=B(S)-b$.
We will first show that all elements of $A(S)\cap [0,B(S)-1]$ must be multiples of $\widetilde{m}$. Consider some $x\in S\cap [0,B(S)-1]$ that is not a multiple of $\widetilde{m}$. Say $x=q\widetilde{m}+r$ with $1\leq r\leq \widetilde{m}-1$. We know that $(q+1)\widetilde{m}\in\widetilde{S}$ and $q\widetilde{m}\leq x\leq b=B(S)-\widetilde{m}$ which implies that $B(S)-(q+1)\widetilde{m}\in S$. Now
$$B(S)-(\widetilde{m}-r)=x+(B(S)-(q+1)\widetilde{m}).$$
However $b=B(S)-\widetilde{m}<B(S)-(\widetilde{m}-r)<B(S)$ so $B(S)-(\widetilde{m}-r)\not\in S$ which implies that $x\not\in A(S)$.

Let $n\widetilde{m}$ be the largest multiple of $\widetilde{m}$ in $S\cap [0,B(S)-1]$. There are two cases.\\
Case 1: Consider the case when $(n+1)m(S)\not\in S$. Suppose $x=q\widetilde{m}\in S$ with $0<x\leq b$. We know that $B(S)-n\widetilde{m}\in\widetilde{S}$ so $B(S)-(n+1-q)\widetilde{m}$ is also in $\widetilde{S}$. This implies that $(n+1-q)\widetilde{m}$ is in $S$. We see that $x+(n+1-q)\widetilde{m}=(n+1)\widetilde{m}$ is not in $S$ and hence $x\notin A(S)$. This implies that $A(S)\cap [1,B(S)-1]=\emptyset$.\\
Case 2: The other case is when $(n+1)\widetilde{m}\in S$. The maximality of $n$ implies that $(n+1)\widetilde{m}\geq B(S)$. This means that $B(S)-n\widetilde{m}\leq \widetilde{m}$, but we know that $B(S)-n\widetilde{m}$ is in $\widetilde{S}$ and it is positive. Therefore $B(S)-n\widetilde{m}=\widetilde{m}$ i.e. $(n+1)\widetilde{m}=B(S)$. For $0\leq j\leq n+1$, $(n+1-j)\widetilde{m}\in\widetilde{S}$ and hence $j\widetilde{m}=B(S)-(n+1-j)\widetilde{m}\in S$.

Now consider the case when all elements in $S\cap [0,B(S)]$ are multiples of $\widetilde{m}$. This implies that $m(S)=\widetilde{m}$. Now if $F(S)<(n+2)\widetilde{m}=B(S)+\widetilde{m}$ then it follows that $S$ is a numerical semigroup and it has at most one small atom. On the other hand if $B(S)+\widetilde{m}\leq F(S)$, then for each $q$ with $1\leq q\leq n+1$ we have $q\widetilde{m}+(n+2-q)\widetilde{m}=(n+2)\widetilde{m}$ which is not in $S$ and hence $q\widetilde{m}\not\in A(S)$. It follows that $A(S)$ has no small elements.

Finally consider the case when there are elements in $S\cap [0,B(S)]$ that are not multiples of $\widetilde{m}$. Let $y$ be the largest such element, say $y=k\widetilde{m}+r'$ with $1\leq r'\leq \widetilde{m}-1$. Consider some $x=q\widetilde{m}\in S$ with $1\leq q\leq n$.

Now if $q\leq n-k$ then $$x+y
\leq (n-k)\widetilde{m}+k\widetilde{m}+\widetilde{m}-1=B(S)-1.$$
Moreover $x+y\equiv r'\not\equiv 0(mod\;\widetilde{m})$, therefore the maximality of $y$ implies that $x+y\not\in S$ and hence $x\not\in A(S)$.

Otherwise we have $q>n-k$. We know that $y\in S$ implies
$$(n+1-k)\widetilde{m}-r'=B(S)-y\in \widetilde{S}$$
and since $\widetilde{S}$ is a numerical semigroup we see that $(q+1)\widetilde{m}-r'\in \widetilde{S}$. Also $(q+1)\widetilde{m}-r'< (n+1)\widetilde{m}=B(S)$, this in turn tells us that
$$(n-q)\widetilde{m}+r'=B(S)-((q+1)\widetilde{m}-r')\in S.$$ Next we have $x+(n-q)\widetilde{m}+r'=n\widetilde{m}+r'$. This quantity is strictly between $b$ and $B(S)$ and hence it is not in $S$. Therefore $x\notin A(S)$ and hence $A(S)\cap [1,B(S)-1]=\emptyset$.

We have shown that $S$ has at most one small atom and if $S$ is not a numerical semigroup then $A(S)\cap [1,B(S)-1]=\emptyset$. Assuming that $S$ is not a numerical semigroup we see that $A(S)$ is either $\{0,F(S)+1\rightarrow\}$ or $\{0,B(S),F(S)+1\rightarrow\}$, in particular $A(S)$ has at most one small atom.
\end{proof}

It is easy to see that $B(S)\in A(S)$ if and only if $S\cap [1,F(S)-B(S)]\neq\emptyset$. Therefore given that $\widetilde{S}$ is a numerical semigroup and $S$ is not, we see that if $[1,F(S)-B(S)]\cap S\neq\emptyset$ then $A(S)=\{0,F(S)+1\rightarrow\}$ and if $[1,F(S)-B(S)]\cap S=\emptyset$ then $A(S)=\{0,B(S),F(S)+1\rightarrow\}$.

\begin{customcor}{4.2}\label{both NS}
Given a numerical semigroup $S$, its complement $\widetilde{S}$ is a numerical semigroup if and only if $S$ has at most one small atom.
\end{customcor}
\begin{proof}
We have already seen one direction that if $S$ is a numerical semigroup with at most one small atom then $\widetilde{S}$ is a numerical semigroup. Now for the other direction assume that $S$ is a numerical semigroup for which $\widetilde{S}$ is also a numerical semigroup. Then by Theorem \ref{Stilde NS} we see that $A(S)$ has at most one small atom, but since $A(S)=S$ we are done.
\end{proof}

\section{Sequence of Complements}

Given a numerical set $S$, we can construct of numerical sets by repeatedly applying the complement operation. The sequence is $S$, $\widetilde{S}$, $\widetilde{\widetilde{S}},\dots$. However since $g(\widetilde{S})<g(S)$, we know that the sequence would eventually reach $\mathbb{N}$. Denote $S^{(0)}=S$ and $S^{(i+1)}=\widetilde{S^{(i)}}$ provided $S^{(i)}\neq \mathbb{N}$. We will show that the length of this sequence is the number of boxes in the Young diagram of $S$ that have hook number $1$. Denote the number of boxes with hook number $1$ as $c_1(S)$.

\begin{proposition}
Given a numerical set $S\neq \mathbb{N}$, $c_1(\widetilde{S})=c_1(S)-1$.
\end{proposition}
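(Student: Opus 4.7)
The plan is to reduce both sides of the identity to a simple invariant of the row-length partition of the Young diagram $Y$ of $S$, then exhibit the row lengths of the complementary diagram $\widetilde{Y}$ by a direct computation.

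First I would observe that a box $(i,j)$ of $Y$ has hook length $1$ precisely when it has no neighbor to its right and none below it. Writing the row lengths of $Y$ as $\lambda_1 \ge \lambda_2 \ge \cdots \ge \lambda_h > 0$ with the convention $\lambda_{h+1}=0$, these boxes are exactly the outer corners $(i,\lambda_i)$ for which $\lambda_{i+1}<\lambda_i$. Since there is one such corner for each distinct value that occurs in the sequence,
\[
c_1(S) \;=\; \#\{\lambda_1,\dots,\lambda_h\}.
\]

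Next I would work out the row lengths of $\widetilde{Y}$. By the complement construction, inside the $h\times\lambda_1$ bounding rectangle the cells not in $Y$ occupy columns $\lambda_i+1,\dots,\lambda_1$ of row $i$, a total of $\lambda_1-\lambda_i$ cells. Rotating the resulting shape by $180^{\circ}$ sends row $i$ to row $h+1-i$ (reversed within the row), so
\[
\widetilde{\lambda}_i \;=\; \lambda_1 - \lambda_{h+1-i},
\]
and the genuine rows of $\widetilde{Y}$ are the $\widetilde{\lambda}_i$ that are positive, i.e.\ those with $\lambda_{h+1-i}<\lambda_1$.

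To conclude, I would apply the first step to $\widetilde{S}$: the set of distinct row lengths of $\widetilde{Y}$ is $\{\lambda_1-\mu : \mu\in\{\lambda_1,\dots,\lambda_h\}\}\setminus\{0\}$, which has one fewer element than $\{\lambda_1,\dots,\lambda_h\}$ because exactly $\mu=\lambda_1$ contributes the discarded value $0$. Hence $c_1(\widetilde{S})=c_1(S)-1$. The only delicate point is verifying the row-length formula $\widetilde{\lambda}_i=\lambda_1-\lambda_{h+1-i}$ under the $180^{\circ}$ rotation; once that is in hand, the ``$-1$'' in the statement corresponds precisely to the top row(s) of $Y$ of maximal length $\lambda_1$ becoming rows of length $0$ in $\widetilde{Y}$, i.e.\ disappearing.
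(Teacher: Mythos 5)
Your argument is correct, and it takes a different route from the paper's. The paper argues directly on the staircase boundary shared by the Young diagrams of $S$ and $\widetilde{S}$: the hook-length-$1$ boxes of both diagrams sit at the corners of this path, they alternate between belonging to $S$ and to $\widetilde{S}$ as one traverses it, and the first and last corners both belong to $S$, which immediately gives $c_1(S)=c_1(\widetilde{S})+1$. You instead coordinatize everything through the partition of row lengths: identifying $c_1$ with the number of distinct parts of $\lambda$, deriving the explicit complement formula $\widetilde{\lambda}_i=\lambda_1-\lambda_{h+1-i}$, and observing that passing from the distinct parts of $\lambda$ to those of $\widetilde{\lambda}$ discards exactly the value $0$ coming from $\mu=\lambda_1$. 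Your version is longer but entirely explicit; in particular it supplies a precise justification that the paper's proof leaves informal (the alternation claim is asserted there by appeal to a figure), and the formula $\widetilde{\lambda}_i=\lambda_1-\lambda_{h+1-i}$ is a reusable fact. The paper's version is shorter and makes the ``why'' visible at a glance: each outer corner of $\widetilde{S}$ is wedged between two consecutive outer corners of $S$ along the common path. Both proofs handle the degenerate cases (rectangular diagrams, a single row or column) correctly.
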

\begin{proof}
In the Young diagram of S, boxes with hook number 1 appear at the corners (see Figure \ref{fig:Hook no 1}). Moreover when one travels along the curve between the Young diagrams of $S$ and $\widetilde{S}$ and looks at the boxes with hook number $1$, they alternately belong to $S$ and $\widetilde{S}$. Since the first and last one both belong to $S$, it follows that $c_1(S)=c_1(\widetilde{S})+1$.
\end{proof}
\begin{figure}[h]
    \centering
    \includegraphics{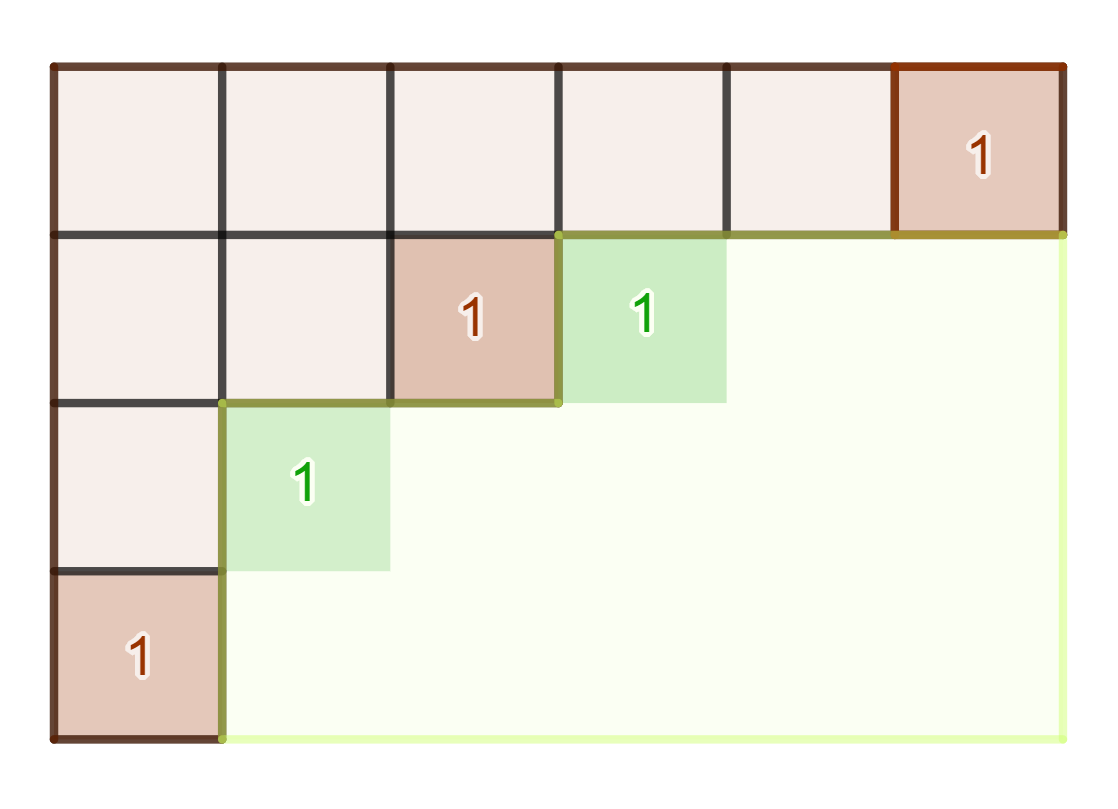}
    \caption{The Young diagrams of a numerical set and its complement with boxes of hook number $1$ marked.}
    \label{fig:Hook no 1}
\end{figure}

\begin{customcor}{5.2}
For any numerical set $S$, $S^{(c_1(S))}=\mathbb{N}$.
\end{customcor}
\begin{proof}
For $n<c_1(S)$, $c_1(S^{(n)})=c_1(S)-n>0$ so $S^{(n)}\neq\mathbb{N}$. Moreover $c_1(S^{(c_1(S))})=0$, so $S^{(c_1(S))}=\mathbb{N}$.
\end{proof}

\begin{customprop}{5.3}
Let $S$ be a numerical set, then $A(S)\subseteq A(S^{(2)})$.
\end{customprop}
\begin{proof}
The Young diagram of $S^{(2)}$ is obtained from the Young diagram of $S$ be deleting several top rows and left columns. Therefore the set of hook numbers of $S^{(2)}$ is a subset of hook numbers of $S$. As mentioned earlier \cite{Kaplan et al} proved that the hook numbers of the Young diagram of $S$ are precisely the gaps of $A(S)$. It follows that $A(S)\subseteq A(S^{(2)})$.
\end{proof}
\begin{customlem}{5.4}\label{max ED}
A numerical semigroup $S$ is of max embedding dimension if and only if $S'=\{x-m(S)\mid x\in S, x>0\}$ is a numerical semigroup.
\end{customlem}
\begin{proof}
See \cite{NS Book}. 
\end{proof}

\begin{customprop}{5.5}
Given a numerical semigroup $S$, $S^{(2)}$ is a numerical semigroup if and only if $S\cup [B(S),\infty)$ is of max embedding dimension.
\end{customprop}
\begin{proof}
Since $S$ is a numerical semigroup we know that $1\not\in S$ and hence $B(\widetilde{S})=B(S)-m(S)$. It follows that
$$S^{(2)}=\{x-m(S)\mid x\in S\cup [B(S),\infty), x>0\}.$$
Now Lemma \ref{max ED} tells us that $S^{(2)}$ is a numerical semigroup if and only if $S\cup [B(S),\infty)$ has max embedding dimension.
\end{proof}

\begin{customcor}{5.6}
If $S$ is a numerical semigroup of max embedding dimension then $S^{(2)}$ is also a numerical semigroup.
\end{customcor}
\begin{proof}
If $S$ has max embedding dimension then Lemma \ref{max ED} tells us that $S'=\{x-m(S)\mid x\in S, x>0\}$ is a numerical semigroup and hence $S^{(2)}=S'\cup [B(S)-m(S),\infty)$ is also a numerical semigroup.
\end{proof}

\begin{customprop}{5.7}
Given a numerical semigroup $S$ and a positive integer $n$ there exists another numerical semigroup $T$ for which $T^{(2n)}=S$.
\end{customprop}
\begin{proof}
We prove this for $n=1$, the general case follows by induction. Let $T=\{0\}\cup\{x+m(S)\mid x\in S\}\setminus \{F(S)+2m(S)\}$, then $m(T)=m(S)$, $F(T)=F(S)+2m(S)$ and $B(T)=F(S)+2m(S)-1$. Next if $x,y\in S$ then $x+y+m(S)\in S$ and $x+y\neq F(S)$ which means that $x+m(S)+y+m(S)\in T$. It follows that $T$ is closed under addition and hence is a numerical semigroup. Finally
$$T\cup [B(T),\infty) =\{0\}\cup\{x+m(S)\mid x\in S\},$$
and hence $T^{(2)}=S$.
\end{proof}

Several other questions can be explored about the sequence of complements. For example,\\
1) If $S$ is a numerical set and $S^{(2)}$ is a numerical semigroup then what can be said about $A(S)$?\\
2) More generally for a fixed $n\geq 2$, if $S$ is a numerical set and $S^{(n)}$ is a numerical semigroup then what can be said about $A(S)$?\\
3) For a fixed odd $n\geq 3$, classify numerical semigroups $S$ for which there is another numerical semigroup $T$ with $T^{(n)}=S$.

\section{Acknowledgements}
This paper is based on research done in the San Diego State University REU 2019. It was supported by NSF-REU award 1851542.

\end{document}